\theoremstyle{plain}
\newtheorem{syzthm}{Theorem}
\newtheorem{syzprop}[syzthm]{Proposition}
\newtheorem{syzcor}[syzthm]{Corollary}
\newtheorem{syzlem}[syzthm]{Lemma}
\theoremstyle{definition}
\newtheorem{syzQ}[syzthm]{Question}
\newtheorem{syzrmk}[syzthm]{Remark}
\newtheorem{syzfact}[syzthm]{Fact}
\newtheorem{syzex}[syzthm]{Example}
\newtheorem{syzconj}[syzthm]{Conjecture}
\theoremstyle{definition}
\newcommand{\LC}{\operatorname{H}}
\newcommand{\pd}{\operatorname{pd}}
\newcommand{\depth}{\operatorname{depth}}
\newcommand{\len}{\lambda}
\newcommand{\coker}{\operatorname{Coker}}
\newcommand{\spec}{\operatorname{Spec}}
\newcommand{\im}{\operatorname{Im}}
\newcommand{\ideal}[1]{\mathfrak{#1}}
\newcommand{\m}{\ideal{m}}
\newcommand{\n}{\ideal{n}}
\newcommand{\p}{\ideal{p}}
\newcommand{\q}{\ideal{q}}
\newcommand{\supp}{\operatorname{Supp}}
\newcommand{\minn}{\operatorname{Min}}
\newcommand{\height}{\operatorname{height}}
\renewcommand{\geq}{\geqslant}
\renewcommand{\leq}{\leqslant}
\newcommand{\Tor}[4][R]{\operatorname{Tor}^{#1}_{#2}(#3,#4)}
\author{Kristen A. Beck}
\address{Kristen A. Beck, Department of Mathematics,
University of Texas at Arlington
P.O. Box 19408,
Arlington, TX 76019-0408, USA}
\email{kbeck@uta.edu}
\author{Micah J. Leamer}
\address{Micah J. Leamer, Department of Mathematics,
University of Nebraska-Lincoln,
PO Box 880130,
Lincoln, NE 68588-0130,
USA}
\email{s-mleamer1@math.unl.edu}
\thanks{This material is based on work that began at the 2011 Mathematical Research Community in Commutative Algebra, located in Snowbird, UT.  The MRC was funded by the American Mathematical Society and the National Science Foundation. Kristen Beck was partially supported by NSA Grant H98230-07-1-0197.
Micah Leamer was funded in part by a GAANN grant from the Department of Education. Part of this work also appears in Micah Leamer's Ph.D. thesis.}
\title{Asymptotic behavior of dimensions of syzygies}
\begin{document}

\begin{abstract}
Let $R$ be a commutative noetherian local ring, and $M$ a finitely generated $R$-module of infinite projective dimension.  It is well-known that the depths of the syzygy modules of $M$ eventually stabilize to the depth of $R$.  In this paper, we investigate the conditions under which a similar statement can be made regarding dimension.  In particular, we show that if $R$ is equidimensional and the Betti numbers of $M$ are eventually non-decreasing, then the dimension of any sufficiently high syzygy module of $M$ coincides with the dimension of $R$.
\end{abstract}

\maketitle

\section*{Introduction}

Throughout this paper $R$ will denote a commutative noetherian local ring with identity element, unique maximal ideal $\m$ and residue field $k:=R/\m$. Let $M$ be a finitely generated $R$-module. The $i$th Betti number of $M$ is given by $\beta_i(M):=\dim_k(\Tor{i}{k}{M})$. A minimal free resolution of $M$ then has the form
\[
\quad\xymatrix{ \cdots \ar[r]^{\delta_3}& R^{\beta_2(M)}\ar[r]^{\delta_2} & R^{\beta_1(M)} \ar[r]^{\delta_1} & R^{\beta_0(M)}\ar[r] & 0}.
\]
The $i$th syzygy module of $M$ is $\Omega_i(M):=\coker(\delta_{i+1})$. We let $\minn(M)$ denote the set of minimal elements under inclusion of $\supp(M):=\{\p\in\spec(R)|\ M_\p\neq 0\}$.  

Our main result is the following, which is part of Theorem \ref{thm:main}.
\begin{syzthm}
Let $R$ be a noetherian local ring and $M$ a finitely generated $R$-module with eventually non-decreasing Betti numbers. Then for all $i\gg 0$ we have $\minn(\Omega_i(M))\subseteq\minn(R)$ and $\supp(\Omega_{i}(M))=\supp(\Omega_{i+2}(M))$.
\end{syzthm}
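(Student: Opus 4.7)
The plan is to argue prime-by-prime via localization, using the decomposition
\[
(\Omega_i(M))_\p \cong \Omega_i^{R_\p}(M_\p) \oplus R_\p^{a_i(\p)}
\]
that comes from splitting the localized minimal free resolution of $M$ as the minimal $R_\p$-free resolution of $M_\p$ together with a direct sum of contractible two-term complexes. A rank count at each degree yields the recurrence $a_{i+1}(\p) + a_i(\p) = \beta_i(M) - \beta_i^{R_\p}(M_\p)$, and hence
\[
a_{i+2}(\p) - a_i(\p) = \bigl(\beta_{i+1}(M) - \beta_i(M)\bigr) - \bigl(\beta_{i+1}^{R_\p}(M_\p) - \beta_i^{R_\p}(M_\p)\bigr).
\]

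For the support equality, I separate primes by whether $\pd_{R_\p}(M_\p)$ is finite or infinite. Primes with $\pd_{R_\p}(M_\p) = \infty$ lie in $\supp(\Omega_i(M))$ for every $i$, so attention focuses on primes with finite local projective dimension, for which Auslander--Buchsbaum gives the uniform bound $\pd_{R_\p}(M_\p) \le \dim R$. Once $i > \dim R$, the condition $\p \in \supp(\Omega_i(M))$ reduces to $a_i(\p) > 0$, and the recurrence simplifies to $a_{i+2}(\p) - a_i(\p) = \beta_{i+1}(M) - \beta_i(M) \ge 0$ for $i$ large enough that the Betti numbers of $M$ are non-decreasing. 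Thus $a_i(\p)$ is non-decreasing within each parity class, and combined with the uniform bound on local projective dimension, this yields stabilization of $\supp(\Omega_{2k}(M))$ and $\supp(\Omega_{2k+1}(M))$ for $k \gg 0$, producing $\supp(\Omega_i(M)) = \supp(\Omega_{i+2}(M))$ for $i \gg 0$.

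For the minimal-primes containment, fix $\p \in \minn(\Omega_i(M))$, so that $(\Omega_i(M))_\p$ is nonzero of finite length over $R_\p$. If $a_i(\p) > 0$, the free summand $R_\p^{a_i(\p)}$ has finite length, forcing $\dim R_\p = 0$ and hence $\p \in \minn(R)$. If $a_i(\p) = 0$, then $\Omega_i^{R_\p}(M_\p)$ itself is nonzero of finite length; as a syzygy (for $i \ge 1$) it embeds into a free $R_\p$-module, so $\p R_\p \in \ass(R_\p)$ and therefore $\p \in \ass(R)$. The remaining task is to exclude the embedded case $\p \in \ass(R) \setminus \minn(R)$ for $i \gg 0$.

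Eliminating this embedded case is what I expect to be the main obstacle. My plan is to combine the depth-stability fact noted in the introduction (that $\depth \Omega_i(M) = \depth R$ for $i \gg 0$) with the standard inequality $\depth_R(\Omega_i(M)) \le \depth_{R_\p}((\Omega_i(M))_\p) + \dim(R/\p)$, which (since $(\Omega_i(M))_\p$ has depth zero) yields $\dim(R/\p) \ge \depth R$; together with the structural observation that in the embedded case $\Omega_{i+1}^{R_\p}(M_\p)$ cannot itself be of finite length, as the short exact sequence
\[
0 \to \Omega_{i+1}^{R_\p}(M_\p) \to R_\p^{\beta_i^{R_\p}(M_\p)} \to \Omega_i^{R_\p}(M_\p) \to 0
\]
would then force $R_\p^{\beta_i^{R_\p}(M_\p)}$ to be of finite length, contradicting $\dim R_\p > 0$. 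Combining this alternation-type behavior of the finite-length syzygies at $\p$ with the support stabilization already established and the non-decreasing Betti hypothesis should yield the desired contradiction.
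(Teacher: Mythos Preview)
Your support--stabilization argument is different from the paper's and almost works, but you glossed over the crucial step. From ``$a_i(\p)$ is non-decreasing within each parity class'' you only get an \emph{increasing} chain of supports, and increasing chains of closed subsets of a noetherian space need not stabilize. What actually saves you is that, once $i$ exceeds both $\dim R$ and the threshold for non-decreasing Betti numbers, the difference $a_{i+2}(\p)-a_i(\p)=\beta_{i+1}(M)-\beta_i(M)$ is \emph{independent of $\p$}. Hence either $\beta_{i+1}(M)>\beta_i(M)$, in which case $a_{i+2}(\p)>0$ for every $\p$ of finite local projective dimension and $\supp(\Omega_{i+2}(M))=\spec(R)$ (and stays there), or $\beta_{i+1}(M)=\beta_i(M)$, in which case $\supp(\Omega_{i+2}(M))=\supp(\Omega_i(M))$. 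This dichotomy gives stabilization. The paper instead shows the supports are \emph{decreasing} within parity (Lemma~\ref{lem:supp}) and invokes DCC; your route is arguably more elementary once the missing dichotomy is made explicit.

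The genuine gap is in the $\minn$ containment. You correctly isolate the hard case $\p\in\ass(R)\smallsetminus\minn(R)$ with $a_i(\p)=0$, but the ingredients you propose do not close it. The depth inequality only yields $\dim(R/\p)\geq\depth R$, which is vacuous when $\depth R=0$ (and $\p\in\ass R$ already forces $\depth R_\p=0$, so you are precisely in the regime where depth gives no leverage). The alternation observation---that $\Omega_{i+1}^{R_\p}(M_\p)$ cannot be of finite length---is correct, but it only tells you $\p\notin\minn(\Omega_{i+1}(M))$, which concerns the \emph{other} parity and does not contradict $\p\in\minn(\Omega_i(M))$. Combining these with support stabilization still leaves you with a local ring $S=R_\p$ of positive dimension and depth zero over which every other minimal syzygy of $M_\p$ has finite length, and nothing you have written rules this out.

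The paper's argument for this step is of a different nature. After stabilization one has, for each $i$, an exact sequence
\[
0\to M_{i+1}\to S^{b_i}\xrightarrow{\alpha_i} S^{b_i}\to M_i\to 0
\]
with $M_i$ of finite length and $\alpha_i$ given by a matrix with entries in the maximal ideal $\n$ (the equality of the two free ranks comes from Lemma~\ref{lem:supp}(a), which in your language is the case $\beta_{i+1}(M)=\beta_i(M)$ forced by $\supp(\Omega_i(M))\neq\spec R$). A local--cohomology length count shows $\lambda(M_i)\geq\lambda(M_{i+1})$, so the lengths stabilize; at that point the connecting map in the $\LC^\bullet_\n$ long exact sequence vanishes, making $\LC^j_\n(\alpha_\ell)$ an isomorphism for all $j\geq 1$. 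But $\alpha_\ell$ has entries in $\n$, hence annihilates the socle of the $\n$-torsion module $\LC^j_\n(S^{b_\ell})$, forcing $\LC^j_\n(S)=0$ for $j\geq 1$ and therefore $\dim S=0$. This local--cohomology trick is the missing idea; your alternation and depth observations do not substitute for it.
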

An important consequence of Theorem \ref{thm:main} is the following corollary, which follows  immediately from Corollary 
\ref{cor:dim}.
\begin{syzcor}
\label{cor:equidim}
Let $R$ be an equidimensional noetherian local ring and $M$ a finitely generated $R$-module with eventually non-decreasing Betti numbers. Then the sequence $(\dim(\Omega_i(M)))_{i=0}^{\infty}$ is constant for $i\gg 0$.
\end{syzcor}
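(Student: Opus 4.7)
The plan is to deduce this statement directly from Theorem \ref{thm:main} together with a standard dimension-vs-minimal-primes computation. First I would dispose of the degenerate case in which $M$ has finite projective dimension: then $\Omega_i(M) = 0$ for all $i \gg 0$, so the sequence is constant (at $\dim(0)$) from that point on. Hence I may assume $\pd_R M = \infty$, and in particular $\Omega_i(M) \ne 0$ for every $i$, so that $\minn(\Omega_i(M))$ is nonempty.

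The second ingredient is the standard fact that for any nonzero finitely generated $R$-module $N$,
\[
\dim(N) \;=\; \max\{\dim(R/\p) : \p \in \supp(N)\} \;=\; \max\{\dim(R/\p) : \p \in \minn(N)\},
\]
where the second equality holds because every prime in $\supp(N)$ contains some $\p \in \minn(N)$, and passing to a smaller prime only enlarges $\dim(R/\p)$.

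Now I would apply Theorem \ref{thm:main} to obtain, for all $i \gg 0$, the inclusion $\minn(\Omega_i(M)) \subseteq \minn(R)$. Equidimensionality of $R$ is by definition the assertion that $\dim(R/\p) = \dim(R)$ for every $\p \in \minn(R)$. Substituting this into the displayed identity gives $\dim(\Omega_i(M)) = \dim(R)$ for all $i \gg 0$, so the sequence $(\dim(\Omega_i(M)))$ is eventually constant with value $\dim(R)$.

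The main obstacle: there really is none at this stage — the corollary is essentially an immediate consequence of Theorem \ref{thm:main}. All the substantive work lives in that theorem (and presumably its companion Corollary \ref{cor:dim} cited in the excerpt); the only care required here is handling the vanishing case and correctly invoking the \emph{dimension equals maximum over minimal primes} formula.
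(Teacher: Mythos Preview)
Your proof is correct and is essentially the same approach as the paper's. The paper routes the deduction through Corollary~\ref{cor:dim} (which separately analyzes the even- and odd-indexed subsequences using Theorem~\ref{thm:main}) and then specializes to the equidimensional case, whereas you go directly from Theorem~\ref{thm:main}\eqref{thm:main1} to the conclusion; this is a cosmetic difference, and your route is if anything slightly more direct.
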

This raises the following open question.
\begin{syzQ}\label{Q1}
Let $R$ be a noetherian local ring and $M$ a finitely generated $R$-module. Is $(\dim(\Omega_i(M)))_{i=0}^{\infty}$ constant for all $i\gg 0$?
\end{syzQ}
This question was also explored in the last section of \cite{Crabbe}. In \cite[Remark 5.2 (i)]{Crabbe} it is noted that if $R$ is unmixed and equidimensional,  then $(\dim(\Omega_i(M)))_{i=0}^{\infty}$ is constant for $i\gg 0$. This is clear since the associated primes of any submodule of $R^{\beta_i(M)}$ are also associated primes of $R$, and are therefore primes of maximal dimension by assumption.

It is worth noting that the asymptotic behavior of the depths of syzygy modules is known.  Given $M$ the sequence $(\depth(\Omega_i(M)))_{i=0}^{\infty}$ is constant for all $i\gg 0$.
Let $\pd(M)$ denote the  projective dimension of $M$. In particular if $\pd(M)=\infty$, then
$\depth(\Omega_n(M))\geq\depth(R)$ for $n\geq\max\{0,\depth(R)-\depth(M)\}$, with at most one strict inequality at either $n=0$ or $n=\depth(R)-\depth(M)+1$; see 
\cite[Proposition 10]{Okiyama} or \cite[Proposition 1.2.8]{Avramov2}.  It follows therefore that if $\pd(M)=\infty$ and $R$ is Cohen-Macaulay, then $\dim(\Omega_n(M))=\dim(R)$ for $n\gg 0$.

All of our results are for modules whose Betti numbers are eventually non-decreasing. Therefore finding a proof for the following conjecture of L. Avramov would improve our results.
\begin{syzconj}\label{Q2}\cite{Avramov1}
The Betti numbers of any finitely generated module over an arbitrary noetherian  local ring are eventually non-decreasing.
\end{syzconj}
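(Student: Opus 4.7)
The plan is to recast the problem in terms of the growth of the Poincar\'e series $P^R_M(t) := \sum_{i \geq 0} \beta_i(M)\, t^i$ and to control that growth via invariants of the minimal free resolution. The strategy splits into three stages: first, verify the statement under a rationality hypothesis on $P^R_M(t)$; second, reduce a general module to a high syzygy of the residue field $k$; and third, analyze the growth of $\beta_i(k)$ directly.

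First I would establish the conjecture whenever $P^R_M(t)$ is a rational function. Classical work of Gulliksen shows this rationality holds over complete intersections, with denominator of the form $(1-t^2)^c$, so that $\beta_i(M)$ eventually agrees with two quasi-polynomials (one on even indices, one on odd) whose leading coefficients are positive multiples of the complexity of $M$; eventual monotonicity then follows by a direct leading-term argument. A parallel argument handles Golod rings, where Betti numbers grow strictly exponentially with positive leading term. So the first serious milestone is to prove the conjecture unconditionally under a rationality hypothesis on $P^R_M(t)$.

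Second I would try to reduce the general case to growth of $P^R_k(t)$ by a change-of-rings or DG-algebra argument. The Tate or bar resolution of $k$ makes $M$ a module over a DG algebra whose homotopy controls $\operatorname{Ext}^*_R(M,k)$, and a spectral sequence or filtration argument should transport eventual monotonicity from $\beta_i(k)$ to $\beta_i(M)$, possibly after absorbing a finitely generated initial contribution. Theorem~\ref{thm:main} of the present paper serves as a mild sanity check here: the supports of $\Omega_i(M)$ stabilize with period two, ruling out pathological dimension drops along the resolution that could make $\beta_i(M)$ decrease erratically via a change of support.

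The main obstacle, and the reason the conjecture has resisted proof for so long, is Anick's construction of a local ring with irrational $P^R_k(t)$: the Betti sequence need not obey any quasi-polynomial or even exponential-quasi-polynomial form, so no asymptotic expansion can force eventual monotonicity. Both of the first two steps above founder at exactly this point. Any complete proof will likely require genuinely new input, perhaps an operator structure on $\operatorname{Ext}^*_R(M,k)$ yielding a convexity inequality of the form $\beta_{i-1}(M)\,\beta_{i+1}(M) \geq c\,\beta_i(M)^2$, or a reduction theorem that sidesteps control of $P^R_M(t)$ altogether.
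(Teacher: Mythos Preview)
The statement you are attempting to prove is not a theorem of the paper; it is Avramov's conjecture, explicitly recorded there as an open problem (labeled \texttt{syzconj}). The paper neither proves it nor claims to. Rather, the paper takes the hypothesis ``eventually non-decreasing Betti numbers'' as an \emph{assumption} in its main results (Theorem~\ref{thm:main}, Corollaries~\ref{cor:equidim} and~\ref{cor:dim}) and cites known partial cases (Golod rings, small codepth, etc.) to indicate the conjecture's reach. There is therefore no ``paper's own proof'' to compare against.

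Your proposal is not a proof either, and you essentially say so yourself. Stage one covers only the rational-Poincar\'e-series cases already in the literature. Stage two is a hope, not an argument: the phrase ``a spectral sequence or filtration argument should transport eventual monotonicity from $\beta_i(k)$ to $\beta_i(M)$'' does not identify any actual spectral sequence, and there is no known mechanism by which monotonicity of $\beta_i(k)$ forces monotonicity of $\beta_i(M)$ for arbitrary $M$. Worse, invoking Theorem~\ref{thm:main} of this paper as a ``sanity check'' is circular, since that theorem already assumes the very conclusion you are trying to establish. Stage three concedes that Anick's irrational Poincar\'e series blocks the entire strategy and ends with a speculative wish for a convexity inequality or an unspecified reduction theorem. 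What you have written is a reasonable survey of why the conjecture is believed and where it is known, but it contains no new idea that would close the gap, and it should not be presented as a proof attempt.
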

There are a plethora of cases for which this conjecture is known to be true. 
J. Lescot \cite[Corollaire 6.5]{Lescot} showed that over a Golod ring, which is not a hypersurface any finitely generated module of infinite projective dimension will have eventually increasing Betti numbers. Also L.-C. Sun  \cite[Corollary]{Sun} showed that over rings of codepth less than or equal to three and Gorenstein rings of codepth four all finitely generated modules have eventually non-decreasing Betti numbers. Several other interesting  cases are also proven in \cite{Avramov3}, \cite{Choi} and \cite{Sun2}. 
 
Whenever the Betti numbers of a module are eventually strictly increasing it is known that the the dimension of a sufficiently high syzygy will have the dimension of the ring. This is clear from the next lemma, which is mentioned without proof in 
\cite[Remark 5.2 (iii)]{Crabbe}.

\section*{Results}\label{sec:results}

 We denote the length of an $R$-module $M$ by $\lambda_R(M)$ or simply $\lambda(M)$ when the ring is unambiguous.

\begin{syzlem}
Let $R$ be a noetherian local ring and $M$ a finitely generated $R$-module.
If $\beta_i(M)>\beta_{i-1}(M)$ for some $i>0$, 
then $\supp(\Omega_{i+1}(M))=\spec(R)$; hence $\dim(\Omega_{i+1}(M))=\dim(R)$.
\end{syzlem}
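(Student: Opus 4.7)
The plan is to argue by contradiction: suppose some $\p \in \spec(R)$ lies outside $\supp(\Omega_{i+1}(M))$, and derive the inequality $\beta_i(M) \leq \beta_{i-1}(M)$. The starting observation is that exactness of the minimal free resolution gives $\Omega_{i+1}(M) = \coker(\delta_{i+2}) \cong \im(\delta_{i+1})$, so $\Omega_{i+1}(M)$ sits inside the free module $R^{\beta_i(M)}$ as the image of $\delta_{i+1}$.

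If $(\Omega_{i+1}(M))_\p = 0$, then by exactness of localization $(\im \delta_{i+1})_\p = 0$, so the localized differential $(\delta_{i+1})_\p \colon R_\p^{\beta_{i+1}(M)} \to R_\p^{\beta_i(M)}$ is zero. Exactness of the localized resolution at the $i$th spot then forces the next differential $(\delta_i)_\p \colon R_\p^{\beta_i(M)} \hookrightarrow R_\p^{\beta_{i-1}(M)}$ to be injective.

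The key step is to reduce to an Artinian local ring, where the ranks of free modules can be compared via length. The plan is to choose a minimal prime $\q \in \minn(R)$ with $\q \subseteq \p$; then further localization (which is again exact) yields an injection $R_\q^{\beta_i(M)} \hookrightarrow R_\q^{\beta_{i-1}(M)}$. Since $R_\q$ is a zero-dimensional noetherian local ring, it has finite positive length $\ell := \lambda_{R_\q}(R_\q)$. Comparing lengths across the injection yields $\beta_i(M)\cdot\ell \leq \beta_{i-1}(M)\cdot\ell$, and hence $\beta_i(M) \leq \beta_{i-1}(M)$, contradicting the hypothesis.

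Therefore every prime of $R$ lies in $\supp(\Omega_{i+1}(M))$, so $\supp(\Omega_{i+1}(M)) = \spec(R)$; in particular $\spec(R)$ contains a prime $\mathfrak{q}_0$ with $\dim(R/\mathfrak{q}_0) = \dim(R)$, giving $\dim(\Omega_{i+1}(M)) = \dim(R)$. No serious obstacle is expected: the proof is essentially a clean application of exactness of localization combined with the standard length argument that free modules of different ranks over a nonzero Artinian local ring cannot admit an injection from the larger rank into the smaller.
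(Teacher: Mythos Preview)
Your proof is correct and follows essentially the same approach as the paper: localize at a minimal prime, where the ring is Artinian of finite positive length, and compare lengths across the exact sequence $0\to\Omega_{i+1}(M)\to R^{\beta_i(M)}\to R^{\beta_{i-1}(M)}$. The only cosmetic difference is that the paper argues directly (showing every minimal prime lies in the support, hence every prime does) while you phrase it as a contradiction and localize twice, first at the offending prime $\p$ and then at a minimal prime $\q\subseteq\p$; the second localization could be done in one step, but the logic is the same.
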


\begin{proof}
Given $\q\in\spec(R)$ there exists $\mathfrak{p}\in\minn(R)$ such that $\p\subseteq \q$. Localizing the exact sequence
\[
0\to \Omega_{i+1}^R(M)\to R^{\beta_i(M)}\to R^{\beta_{i-1}(M)}
\]
at $\mathfrak{p}$  we obtain the following inequalities. 
\[
\lambda_{R_\p}(\Omega_{i+1}(M)_\p)\geq \lambda_{R_\p}(R_{\mathfrak{p}}^{\beta_{i}(M)})- \lambda_{R_\p}(R_{\mathfrak{p}}^{\beta_{i-1}(M)})=\lambda_{R_\p}(R_\p)(\beta_{i}(M)-\beta_{i-1}(M))>0
\]
Thus $\p\in\supp(\Omega_{i+1}(M))$; hence $\q\in\supp(\Omega_{i+1}(M))$ and the result follows.
\end{proof}

The following lemma is used in the proof of our main result, Theorem \ref{thm:main}.

\begin{syzlem} \label{lem:supp}
\index{Betti number!for local rings, $\beta^R_i(-)$}
Let $R$ be a noetherian local ring and $M$ a finitely generated $R$-module. For a given $n\in\mathbb{N}$ suppose that $\beta_0(M)\leq\beta_{1}(M)\leq \hdots\leq \beta_{2n-1}(M)$ and that $\supp(\Omega_{2n}(M))\neq\spec(R)$.  Then we have the following:
\begin{enumerate}[(a)]
\item  \label{lem:supp1} $\beta_{2i}(M)=\beta_{2i+1}(M)$  for $i=0,\hdots,n-1$;
\item  \label{lem:supp2} $\supp(\Omega_{2i+2}(M))\subseteq\supp(\Omega_{2i}(M))$ for $i=0,\hdots ,n-1$; and
\item  \label{lem:supp3} $\supp(\Omega_{2n}(M))\cap\minn(R)=\supp(M)\cap\minn(R)$.
\end{enumerate}
\end{syzlem}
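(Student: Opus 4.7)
The plan is to localize at a carefully chosen minimal prime of $R$ and compare lengths over an Artinian local ring. Since $\supp(\Omega_{2n}(M))\neq\spec(R)$, pick $\p\in\spec(R)\ssm\supp(\Omega_{2n}(M))$ and choose a minimal prime $\q\in\minn(R)$ with $\q\subseteq\p$; then $R_\q$ is Artinian local and $\Omega_{2n}(M)_\q=0$. Splicing the standard short exact sequences $0\to\Omega_{j+1}(M)\to R^{\beta_j(M)}\to\Omega_j(M)\to 0$ for $j=0,\ldots,2n-1$ and localizing at $\q$ yields the exact sequence
\[
0\to R_\q^{\beta_{2n-1}(M)}\to R_\q^{\beta_{2n-2}(M)}\to\cdots\to R_\q^{\beta_0(M)}\to M_\q\to 0
\]
of finite-length $R_\q$-modules.

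For part (a), the alternating sum of lengths in this sequence gives
\[
\lambda(M_\q)=\lambda(R_\q)\sum_{i=0}^{n-1}\bigl(\beta_{2i}(M)-\beta_{2i+1}(M)\bigr).
\]
Every summand on the right is non-positive by the non-decreasing hypothesis, while the left side is non-negative; since $\lambda(R_\q)>0$, each summand must vanish. This yields $\beta_{2i}(M)=\beta_{2i+1}(M)$ for $i=0,\ldots,n-1$, and, as a byproduct, $M_\q=0$.

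For part (b), given any $\p\notin\supp(\Omega_{2i}(M))$, localizing the short exact sequence $0\to\Omega_{2i+1}(M)\to R^{\beta_{2i}(M)}\to\Omega_{2i}(M)\to 0$ at $\p$ shows $\Omega_{2i+1}(M)_\p\cong R_\p^{\beta_{2i}(M)}$. The next short exact sequence then reduces to a surjection $R_\p^{\beta_{2i+1}(M)}\twoheadrightarrow R_\p^{\beta_{2i}(M)}$ of finite free $R_\p$-modules of equal rank (using part (a)); since a surjective endomorphism of a finitely generated module is injective, this is an isomorphism, forcing $\Omega_{2i+2}(M)_\p=0$.

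For part (c), the same alternating-sum computation at an arbitrary $\q\in\minn(R)$ --- now without presuming $\Omega_{2n}(M)_\q=0$ --- yields
\[
\lambda(\Omega_{2n}(M)_\q)=\lambda(R_\q)\sum_{i=0}^{n-1}\bigl(\beta_{2i+1}(M)-\beta_{2i}(M)\bigr)+\lambda(M_\q),
\]
which collapses to $\lambda(\Omega_{2n}(M)_\q)=\lambda(M_\q)$ by (a). At a minimal prime $\q$, vanishing of the length is equivalent to vanishing of the module, so $\Omega_{2n}(M)_\q=0$ if and only if $M_\q=0$, which is the content of (c). The main obstacle is the initial length calculation for (a): one has to notice that the non-decreasing Betti hypothesis, combined with Artinian length accounting at just the right prime, forces each Betti-difference to vanish; parts (b) and (c) then fall out as clean corollaries.
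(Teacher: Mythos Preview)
Your proof is correct and follows essentially the same strategy as the paper: localize at a minimal prime outside $\supp(\Omega_{2n}(M))$ and exploit that $R_\q$ is Artinian to control lengths. The only difference is organizational: the paper argues step by step (the injection $R_\q^{\beta_{2n-1}}\hookrightarrow R_\q^{\beta_{2n-2}}$ forces $\beta_{2n-2}=\beta_{2n-1}$, hence the map is an isomorphism by length, and then one repeats downward), whereas you package the same length accounting into a single alternating-sum identity; likewise for (c), the paper obtains the two inclusions separately from (b) and from $M_\q=0$, while your length equality $\lambda(\Omega_{2n}(M)_\q)=\lambda(M_\q)$ gives both at once. These are cosmetic differences, not a genuinely different route.
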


\begin{proof}
 Choose $\p\in\minn(R)\diagdown\supp(\Omega_{2n}(M))$. Localizing part of a minimal free resolution of $M$ at $\p$, we get an exact sequence of finite-length $R_\p$-modules of the following form.
\[
0\to R_{\p}^{\beta_{2n-1}(M)}\xrightarrow{\varphi_{2n-1}} R_{\p}^{\beta_{2n-2}(M)}\to\cdots\to R_{\p}^{\beta_0(M)}\xrightarrow{\varphi_0} M_\p\to 0
\]
Since $\varphi_{2n-1}$ is an injection, $\len( R_{\p}^{\beta_{2n-2}(M)})\geq\len(R_{\p}^{\beta_{2n-1}(M)})$; hence $\beta_{2n-2}(M)\geq\beta_{2n-1}(M)$.  It follows that $\beta_{2n-2}(M)=\beta_{2n-1}(M)$. Since $R_\p$ has finite length it follows that $\varphi_{2n-1}$ is an isomorphism and $\varphi_{2n-2}$ is the zero map. By repeating this argument, one sees that $\varphi_{2i+1}$ is an isomorphism, $\varphi_{2i}$ is the zero map, and $\beta_{2i}(M)=\beta_{2i+1}(M)$ for each $i=0,1,\hdots, n-1$.  In particular, we have shown \eqref{lem:supp1}.

Since $\varphi_0$ is the zero map, $M_\p=0$; hence $\p\notin\supp(M)$.  It follows that 
\begin{equation}\label{eq:supp}
\supp(M)\cap\minn(R)\subseteq\supp(\Omega_{2n}(M))\cap\minn(R).
\end{equation}
 
Let $\q\in\spec(R)\diagdown\supp(\Omega_{2i}(M))$ for some $i$ with $0\leq i\leq n-1$. Localizing 
part of a minimal free resolution of $M$ at $\q$ we obtain an exact sequence of the following form.
\[
0\to \Omega_{2i+2}(M)_{\q}\to R_{\q}^{\beta_{2i+1}(M)}\to R_{\q}^{\beta_{2i}(M)}\to 0
\]
Since $\beta_{2i+1}(M)=\beta_{2i}(M)$ it follows that $\Omega_{2i+2}(M)_{\q}=0$ and $\q\notin\supp(\Omega_{2i+2}(M))$.  Thus $\supp(\Omega_{2i+2}(M))\subseteq \supp(\Omega_{2i}(M))$ for 
$ i=0,\hdots,n-1$, which proves \eqref{lem:supp2}. Consequently
$\supp(\Omega_{2n}(M))\cap\minn(R)\subseteq\supp(M)\cap\minn(R)$.
 Since \eqref{eq:supp} provides the reverse containment, \eqref{lem:supp3} is now immediate.
\end{proof}

We make the following fact explicit in order to clarify some of our argumentation.
\begin{syzfact} \label{fact:linalg}
Let $(R,\m)$ be a noetherian local ring. Let $B$ be an $n\times m$ matrix with entries in $R$ defining a map from $R^n$ to $R^m$. Applying invertible row and column operations to $B$ one can obtain a matrix $B'=I_h\oplus A$ where $I_h$ is the $h\times h$ identity matrix for some $h\geq 0$ and $A$ is an $(n-h)\times(m-h)$ matrix with entries in $\m$. 
\end{syzfact}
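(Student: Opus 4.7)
The plan is to proceed by induction on $\min(n,m)$, reducing to the case where all remaining entries lie in $\m$ by peeling off one $1\times 1$ identity block at a time. The key observation is that in a local ring, an element is invertible if and only if it lies outside $\m$, so any entry of $B$ not in $\m$ can be used as a pivot.

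First I would handle the base case: if every entry of $B$ is already in $\m$, then take $h=0$ and $A=B$, and there is nothing to do. Otherwise, select an entry $b_{ij}\notin\m$. Since $(R,\m)$ is local, $b_{ij}$ is a unit. Using invertible row and column swaps, move $b_{ij}$ to position $(1,1)$. Then multiply the first row by $b_{ij}^{-1}$ (an invertible row operation, since scaling by a unit is invertible) to make the $(1,1)$ entry equal to $1$. For each $k>1$, subtract $b_{k1}$ times row $1$ from row $k$, and similarly subtract $b_{1k}$ times column $1$ from column $k$. These are invertible row/column operations over $R$, and after performing them $B$ is transformed into a block matrix of the form
\[
\begin{pmatrix} 1 & 0 \\ 0 & B' \end{pmatrix},
\]
where $B'$ is an $(n-1)\times(m-1)$ matrix with entries in $R$.

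Next I would apply the inductive hypothesis to $B'$: there exist invertible row and column operations that transform $B'$ into a matrix of the form $I_{h'}\oplus A$, where $A$ is an $(n-1-h')\times(m-1-h')$ matrix with entries in $\m$. Padding these operations by the identity in the first row and column, we obtain invertible row and column operations on the block matrix above that yield $I_{h'+1}\oplus A$. Setting $h=h'+1$ completes the reduction.

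The main conceptual point, rather than a genuine obstacle, is the use of locality: in a general commutative ring one cannot necessarily find a single unit entry to pivot on, but over a local ring the dichotomy ``unit or in $\m$'' is clean, so the recursion terminates in at most $\min(n,m)$ steps and produces the claimed normal form.
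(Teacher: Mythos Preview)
Your argument is correct and is the standard Gaussian-elimination proof of this fact over a local ring. The paper itself does not supply a proof: the statement is recorded as a known fact (``We make the following fact explicit in order to clarify some of our argumentation''), so there is nothing to compare against, and your write-up would serve perfectly well as the missing justification.
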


\begin{syzthm}\label{thm:main}
\index{Betti number!for local rings, $\beta^R_i(-)$}
Let $R$ be a noetherian local ring and $M$ a finitely generated $R$-module with eventually non-decreasing Betti numbers. Then for all $n\gg 0$ we have the following:
\begin{enumerate}[(a)]
\item \label{thm:main1} $\minn(\Omega_n(M))\subseteq\minn(R)$;
\item \label{thm:main2} $\supp(\Omega_{n}(M))=\supp(\Omega_{n+2i})$ for all $i\geq 0$; and
\item \label{thm:main3} if $\supp(\Omega_{n}(M))\neq\spec(R)$, then $\beta_{n+2i}(M)=\beta_{n+2i+1}(M)$ for all $i\geq 0$.
\end{enumerate}
\end{syzthm}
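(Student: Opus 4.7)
The plan is first to make a WLOG simplification, then prove (b), (c), and (a) in turn. By replacing $M$ with a sufficiently high syzygy, I may assume that the Betti numbers of $M$ are non-decreasing from the start. I may also assume $\pd(M) = \infty$, since otherwise $\Omega_n(M) = 0$ for $n \gg 0$ and the theorem is vacuous. Write $S_n := \supp(\Omega_n(M))$, a closed subset of $\spec(R)$.

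The plan for (b) is to establish a dichotomy for the even subsequence $(S_{2n})_{n \geq 0}$: either $S_{2n} = \spec(R)$ for all $n$, or $S_{2n} \neq \spec(R)$ for all $n$. The key step is that once $S_{2n_0} = \spec(R)$, all later $S_{2n}$ are $\spec(R)$ as well. Indeed, if $S_{2m} \neq \spec(R)$ for some $m > n_0$, I would pick $\p_0 \in \minn(R) \setminus S_{2m}$ (nonempty because $\bigcup_{\p \in \minn(R)} V(\p) = \spec(R)$ cannot be contained in the proper closed subset $S_{2m}$) and apply the localization argument from the proof of Lemma~\ref{lem:supp} at $\p_0$ with index $m$ to obtain $\Omega_{2n_0}(M)_{\p_0} = 0$, contradicting $\p_0 \in S_{2n_0} = \spec(R)$. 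In the ``never $\spec(R)$'' branch, Lemma~\ref{lem:supp}(b) gives a descending chain of closed subsets of $\spec(R)$, which stabilizes by noetherianity. The odd subsequence is handled by running the same argument on $\Omega_1(M)$. For (c), if $S_n \neq \spec(R)$ for some large $n$ (of the target parity), then by this dichotomy $S_{n+2i} \neq \spec(R)$ for all $i \geq 0$, and applying Lemma~\ref{lem:supp}(a) at arbitrarily large indices yields the stated Betti equalities.

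For (a), the case where $S_n = \spec(R)$ eventually gives $\minn(\Omega_n(M)) = \minn(R)$ immediately. In the stable case $S_n = T \neq \spec(R)$, since $\Omega_n(M)$ embeds in $R^{\beta_{n-1}(M)}$ for $n \geq 1$, any $\p \in \minn(T)$ lies in $\ass(\Omega_n(M)) \subseteq \ass(R)$; combined with Lemma~\ref{lem:supp}(c), which gives $T \cap \minn(R) = \supp(M) \cap \minn(R)$, the remaining task is to show that no embedded associated prime of $R$ can lie in $\minn(T)$. The main obstacle is exactly this: if $\p \in \minn(T)$ were embedded, any minimal prime $\p_0 \subsetneq \p$ of $R$ would satisfy $\p_0 \notin T$ by minimality of $\p$ in $T$, and Lemma~\ref{lem:supp}(c) would then force $\p_0 \notin \supp(M)$, i.e., $M_{\p_0} = 0$. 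To rule this configuration out I would use Fact~\ref{fact:linalg} to reduce the $\p$-localization of the minimal resolution of $M$ to the minimal resolution of $M_\p$ over $R_\p$, and exploit that $\dim(R_\p) \geq 1$ in the embedded case to obtain a contradiction with the finite-length constraint on $(\Omega_n(M))_\p$ forced by $\p \in \minn(T)$.
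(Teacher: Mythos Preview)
Your handling of (b) and (c) is essentially the paper's: reduce to non-decreasing Betti numbers, use Lemma~\ref{lem:supp} to get the descending chain of even supports and the Betti equalities, and invoke noetherianity for stabilization. (One small misstatement: the dichotomy you announce --- ``$S_{2n}=\spec(R)$ for all $n$ or for no $n$'' --- is not what your argument proves. You only show that once $S_{2n_0}=\spec(R)$ it stays there, so the correct alternative is ``eventually $\spec(R)$'' versus ``never $\spec(R)$''; cf.\ Proposition~\ref{prop:quick}. This does not affect (b) or (c).)

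The real gap is in (a). In the stable case $T\neq\spec(R)$ you take $\p\in\minn(T)\subseteq\ass(R)$ assumed embedded, note $\dim R_\p\geq 1$, reduce via Fact~\ref{fact:linalg} to the minimal $R_\p$-resolution of $M_\p$, and then propose to ``obtain a contradiction with the finite-length constraint on $(\Omega_n(M))_\p$''. But you never say what that contradiction is, and there is no elementary one available. You only control the \emph{even} syzygies: $(\Omega_{2n}M)_\p$ has finite length, but the odd ones need not, so you cannot trap a nonzero free $R_\p$-module between two finite-length modules in a three-term sequence. The four-term sequence
\[
0\to(\Omega_{2n+2}M)_\p\to R_\p^{\,b}\to R_\p^{\,b}\to(\Omega_{2n}M)_\p\to 0
\]
with finite-length ends and $\dim R_\p\geq 1$ is perfectly consistent, and the New Intersection Theorem applied to $R_\p^{\,b}\to R_\p^{\,b}$ only yields $\dim R_\p\leq 1$, not $0$.

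The paper's argument supplies exactly the missing step, and it is the heart of the proof. Splitting the four-term sequence into two short exact sequences and running the local cohomology long exact sequences shows $\lambda((\Omega_{2n}M)_\p)\geq\lambda((\Omega_{2n+2}M)_\p)$, so these lengths eventually stabilize. At an index $\ell$ where equality holds, the connecting map vanishes and one deduces that the induced map
\[
\LC^j_{\n}(\alpha_\ell)\colon \LC^j_{\n}(R_\p^{\,b_\ell})\longrightarrow \LC^j_{\n}(R_\p^{\,b_\ell})
\]
is an isomorphism for every $j\geq 1$, where $\n=\p R_\p$ and $\alpha_\ell$ is (after the reduction of Fact~\ref{fact:linalg}) given by a matrix with entries in $\n$. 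Such a map kills socle elements, so $\LC^j_{\n}(R_\p^{\,b_\ell})$ has trivial socle; being $\n$-torsion it must vanish for all $j\geq 1$, and Grothendieck's theorem then forces $\dim R_\p=0$. This local cohomology argument is precisely what your sketch lacks.
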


\begin{proof}
We may assume that $\pd(M)=\infty$. By replacing $M$ by a sufficiently high syzygy, one may assume that 
$\beta_{i+1}(M)\geq\beta_i(M)$ for all $i\geq 0$. Assuming $M$ was replaced by an even (odd) syzygy, if $\supp(\Omega_{2i}(M))=\spec(R)$ for $i\gg0$, then all of the statements hold for even (odd) syzygies. Therefore we may suppose that there exist infinitely many $i\in\mathbb{N}$ such that $\supp(\Omega_{2i}(M))\neq\spec(R)$. 

Since $\minn(R)$ is a finite set we may choose $\p\in\minn(R)$ so that there are infinitely many $i\in\mathbb{N}$ for which 
$\p\notin\supp(\Omega_{2i}(M))$. 
For each positive integer $c$ such that $\p\notin\supp(\Omega_{2c}(M))$, Lemma \ref{lem:supp} implies that we have
$\supp(\Omega_{2i+2}(M))\subseteq\supp(\Omega_{2i}(M))$ and $\beta_{2i}(M)=\beta_{2i+1}(M)$ for all $0\leq i<c$. Since, $c$ can be chosen to be arbitrarily large 
 we have
$\supp(\Omega_{2i+2}(M))\subseteq\supp(\Omega_{2i}(M))$ and $\beta_{2i}(M)=\beta_{2i+1}(M)$ for all $i\geq 0$.
Since closed sets in the Zariski topology satisfy the descending chain condition, it follows that we may choose $m\gg 0$ such that $(\supp(\Omega_{2m+2i}(M)))_{i=0}^{\infty}$ is constant, proving \eqref{thm:main2}. Therefore the assumption that there exist infinitely many $i\in\mathbb{N}$ such that $\supp(\Omega_{2i}(M))\neq\spec(R)$ is equivalent to assuming that $\supp(\Omega_{2m}(M))\neq\spec(R)$ and \eqref{thm:main3} follows.

Therefore it remains to show that $\minn(\Omega_{2i}(M))\subseteq\minn(R)$ for $i\gg 0$.
Choose $\q\in\minn(\Omega_{2m}(M))$. Let $S:=R_\q$, $M_i:=(\Omega_{2m+2i}(M))_{\q}$  for $i\geq 0$ and $\n:=\q R_\q$. Note that $\n$ is the maximal ideal for $S$.
For all $i\geq 0$ we obtain a commutative diagram of the form 
\begin{equation}
\begin{split}
\xymatrixrowsep{1pc}\xymatrixcolsep{1pc}\xymatrix
{
0\ar[r] & M_{i+1}\ar[r] & S^{b_i}\ar[rr]^{\alpha_i}\ar@{->>}[dr]_{\phi_i} && S^{b_i}\ar[r] & M_i\ar[r] & 0\\
&&& N_i\ar@{^{(}->}[ur]_{\psi_i}
}
\end{split}\label{commdiag}
\end{equation}
where the top row is exact and $N_i:=\im(\alpha_i)$.
If the matrix $A_i$ defining the map $\alpha_i:S^{b_i}\to S^{b_i}$ has some entries which are units, then by Fact \ref{fact:linalg} we can reduce this sequence by taking away free summands; hence we may assume that $A_i$ has all of its entries in $\n$. 

Let $\LC_\n^i(-)$ denote the $i$th local cohomology functor with respect to $\n$. 
 For background on local cohomology see \cite{Iyengar}. 
Since $M_{i+1}$ has finite length $\LC^0_{\n}(M_{i+1})\cong M_{i+1}$ and $\LC^j_{\n}(M_{i+1})=0$ for all $j>0$. From the long exact sequence of local cohomology modules associated to the short exact sequence
\[
\xymatrix{0\ar[r]& M_{i+1} \ar[r]& S^{b_i}\ar[r]^{\phi_i}& N_i\ar[r]& 0},
\]
 we get an exact sequence
\begin{equation}\label{longExact1}
\xymatrix{0\ar[r] & M_{i+1} \ar[r] & \LC^0_{\n}(S^{b_i})\ar[r] & \LC^0_{\n}(N_i)\ar[r] & 0}
\end{equation}
and isomorphisms $\LC^j_{\n}(\phi_i):\LC^j_{\n}(S^{b_i})\to \LC^j_{\n}(N_i)$ for all $j\geq 1$.
Also the exact sequence 
\[
\xymatrix{0\ar[r]& N_i\ar[r]^{\psi_i} & S^{b_i}\ar[r]& M_i\ar[r] &0}
\]
yields an exact sequence
\begin{equation}\label{longExact}
\xymatrixcolsep{1.5pc}\xymatrix{0\ar[r] & \LC^0_{\n}(N_{i}) \ar[r] & \LC^0_{\n}(S^{b_i})\ar[r] & M_i\ar[r]^{\gamma_i\ \ \  } & \LC^1_{\n}(N_{i}) \ar[r] & \LC^1_{\n}(S^{b_i})\ar[r] & 0} 
\end{equation}
and isomorphims $\LC^j_{\n}(\psi_i): \LC^j_{\n}(N_{i}) \to \LC^j_{\n}(S^{b_i})$ for all $j\geq 2$.
Here we are defining $\gamma_i:M_i\to \LC^1_{\n}(N_{i})$ to be the map found in exact sequence \eqref{longExact}.
By the additivity of length we get the first and third steps in the next display from sequences \eqref{longExact} and \eqref{longExact1} respectively.
\begin{align*}
\len(M_i)&=\len(\LC^0_{\n}(S^{b_i}))-\len(\LC^0_{\n}(N_i))+\lambda(\im(\gamma_i))\\
&\geq \len(\LC^0_{\n}(S^{b_i}))-\len(\LC^0_{\n}(N_i))\\
&=\len(M_{i+1})
\end{align*}

Since the sequence $(\len(M_i))_{i=0}^{\infty}$ is positive and non-increasing it is eventually constant.
Choose $\ell\in\mathbb{N}$ such that $\len(M_\ell)=\len(M_{\ell+1})$. Then 
$\lambda(\im(\gamma_{\ell})))=0$.  Therefore $\gamma_{\ell}$ is the zero map. 
From \eqref{longExact}, it follows that $\LC^1_{\n}(\psi_\ell):\LC^1_{\n}(N_{\ell})\to \LC^1_{\n}(S^{b_\ell})$ is an isomorphism. We have shown that $\LC^j_{\n}(\psi_\ell)$ and $\LC^j_{\n}(\phi_{\ell})$ are isomorphisms for all $j\geq 1$. Using the commutativity of \eqref{commdiag} it follows that 
\[
 \LC^j_{\n}(\alpha_\ell)=\LC^j_{\n}(\psi_\ell)\circ \LC^j_{\n}(\phi_{\ell}):\LC^j_{\n}(S^{b_\ell})\to \LC^j_{\n}(S^{b_\ell})
\]
is an isomorphism for all $j\geq 1$. Since $\LC^j_{\n}(-)$ is an $S$-linear functor the map $\LC^j_{\n}(\alpha_\ell)$ is defined by matrix multiplication from the matrix $A_\ell$ applied to the components of $\LC^j_{\n}(S^{b_\ell})$. Since $A_\ell$ has entries in $\n$  it must kill socle elements of $\LC^j_{\n}(S^{b_\ell})$. Therefore  $\LC^j_{\n}(S^{b_\ell})$ has no socle elements. Since 
$\LC^j_{\n}(S^{b_\ell})$ is $\n$-torsion it follows that $\LC^j_{\n}(S^{b_\ell})=0$ for all $j\geq 1$. By \cite[Theorem 9.3]{Iyengar} we get the second equality in the next display.
\[
\dim(R_\q)=\dim(S)=\sup\{ j|\ \LC^j_{\n}(S)\neq 0\}=0
\]
 Thus $\q\in\minn(R)$;   
hence $\minn(\Omega_{2i}(M))\subseteq\minn(R)$ for all $i\gg 0$, and \eqref{thm:main1} follows.
\end{proof}

\begin{syzcor} \label{cor:dim}
\index{Betti number!for local rings, $\beta^R_i(-)$}
Let $R$ be a noetherian local ring and $M$ a finitely generated $R$-module with eventually non-decreasing Betti numbers. Then $(\dim(\Omega_{2i}(M)))_{i=0}^{\infty}$ and $(\dim(\Omega_{2i+1}(M)))_{i=0}^{\infty}$
are constant for $i\gg 0$. If $\pd(M)=\infty$ then one sequence stabilizes to $\dim(R)$ and the other sequence stabilizes to $\dim(R/\p)$ for some $\p\in\minn(R)$.
\end{syzcor}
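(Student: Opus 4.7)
The plan is to deduce the corollary directly from Theorem \ref{thm:main}. If $\pd(M)<\infty$ then $\Omega_n(M)=0$ for $n\gg 0$, so both dimension sequences are eventually constant (at the dimension of the zero module) and the second assertion is vacuous; I therefore assume $\pd(M)=\infty$ from here on.

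For the eventual constancy under $\pd(M)=\infty$, I would use the identity $\dim(N)=\max\{\dim(R/\p)\mid\p\in\minn(N)\}$ valid for any nonzero finitely generated $R$-module $N$. Theorem \ref{thm:main}(b) says that $\supp(\Omega_{2i}(M))$ and $\supp(\Omega_{2i+1}(M))$ are each eventually constant in $i$; passing to minimal elements, $\minn(\Omega_{2i}(M))$ and $\minn(\Omega_{2i+1}(M))$ stabilize, and therefore so do the two dimension sequences. Theorem \ref{thm:main}(a) further places $\minn(\Omega_n(M))\subseteq\minn(R)$ for $n\gg 0$, so each limit takes the form $\dim(R/\p)$ for some $\p\in\minn(R)$ (the set $\minn(\Omega_n(M))$ being nonempty because $\Omega_n(M)\ne 0$ for all $n$).

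It remains to show that at least one of the two limits equals $\dim(R)$. I would fix $\p_0\in\minn(R)$ with $\dim(R/\p_0)=\dim(R)$ and argue that $\p_0\in\supp(\Omega_n(M))$ for infinitely many $n$. Suppose not: then $(\Omega_n(M))_{\p_0}=0$ for all $n\geq N$, and localizing the short exact sequence $0\to\Omega_{n+1}(M)\to R^{\beta_n(M)}\to\Omega_n(M)\to 0$ at $\p_0$ for such $n$ forces $R_{\p_0}^{\beta_n(M)}=0$; since $R_{\p_0}\ne 0$, this gives $\beta_n(M)=0$, contradicting $\pd(M)=\infty$. By pigeonhole, $\p_0$ lies in $\supp(\Omega_n(M))$ for infinitely many $n$ of a single parity, and combined with the parity-wise stability of Theorem \ref{thm:main}(b), $\p_0$ belongs to the eventual support of that parity. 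Because $\p_0$ is minimal in $\spec(R)$, it is automatically a minimal prime of any support containing it, so the corresponding dimension sequence is bounded below by $\dim(R/\p_0)=\dim(R)$; the reverse bound being automatic, equality holds.

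The only step carrying any real content is the pigeonhole argument isolating a prime $\p_0$ of maximal coheight in infinitely many syzygy supports — the nonvanishing input it requires (via the localization-at-$\p_0$ trick) is the one place where $\pd(M)=\infty$ actually enters. Everything else is formal bookkeeping on top of Theorem \ref{thm:main}.
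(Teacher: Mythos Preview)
Your proof is correct and follows essentially the same approach as the paper: both deduce eventual constancy and the $\dim(R/\p)$ form directly from Theorem~\ref{thm:main}(a),(b), and the final step in each rests on the same observation that a free module $R^{\beta_n(M)}$ sandwiched between consecutive syzygies forces their supports to cover $\spec(R)$ when $\pd(M)=\infty$. The paper records this more crisply as $\supp(\Omega_{2i}(M))\cup\supp(\Omega_{2i+1}(M))=\spec(R)$ and reads off the conclusion in one line, whereas you unpack it into a contradiction-and-pigeonhole argument on a single prime $\p_0$, but the content is identical.
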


\begin{proof}
By Theorem \ref{thm:main}~\eqref{thm:main2} both sequences are constant for $i\gg 0$. If $\pd(M)=\infty$ then $\minn(\Omega_i(M))\neq 0$ for all $i$. Therefore by Theorem \ref{thm:main}~\eqref{thm:main1} one sequence will stabilize to $\dim(R/\p)$ and the other to $\dim(R/\q)$ for some $\p,\q\in\minn(R)$. Since 
\[
\supp(\Omega_{2i}(M))\cup\supp(\Omega_{2i+1}(M))=\spec(R),
\]
it follows that $\dim(\Omega_{2i}(M))=\dim(R)$ or $\dim(\Omega_{2i+1}(M))=\dim(R)$.
\end{proof}

Corollary \ref{cor:equidim} follows immediately.
Note that if $R$ is a domain or if $\dim(R)\leq 1$, then $R$ is equidimensional; hence, one can apply Corollary \ref{cor:equidim}.

\begin{syzrmk}\label{rmk2}
It should be noted that \cite[Remark 5.6]{Crabbe} claims that using \cite[Proposition 5.5]{Crabbe} one can show that if $R$ is equidimensional and Conjecture \ref{Q2} is true, then $\dim(\Omega_n(M))$ is constant for $n\gg 0$.  However, \cite[Proposition 5.5]{Crabbe} requires the assumption that $\dim(R)\geq 2$.  Therefore although the conclusions of  \cite[Remark 5.6]{Crabbe} are correct, the justification given for these conclusions is invalid. One should note the justification uses a localization argument, so it is invalid in every positive dimension, not just dimension 1. 
\end{syzrmk}

We now turn our attention to determining how quickly $(\supp(\Omega_{2i}(M)))_{i=0}^{\infty}$ stabilizes once the Betti numbers of $M$ become non-decreasing.

\begin{syzlem}\label{lem:shrink}
\index{Betti number!for local rings, $\beta^R_i(-)$}
Let $R$ be a noetherian local ring and $M$ a finitely generated $R$-module. If $\beta_i(M)=\beta_{i+1}(M)$ for some $i>0$, then 
$\supp(\Omega_i(M))=\supp(\Omega_{i+2}(M))$.

Suppose $\beta_0(M)=\beta_1(M)$. Then we have the following:
\begin{enumerate}[(a)]
\item\label{lem:shrink1} If $\supp(M)\smallsetminus\supp(\Omega_2(M))\neq\emptyset$, then $M$ is not a first syzygy.
\item \label{lem:shrink3} If $\p\in\minn(M)\smallsetminus\supp(\Omega_2(M))$, then $\height(\p)=1$.
\end{enumerate}
\end{syzlem}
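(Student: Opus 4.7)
The plan is to handle the three statements in sequence, localizing at primes and exploiting the short exact sequences $0\to\Omega_{j+1}(M)\to R^{\beta_j(M)}\to\Omega_j(M)\to 0$ associated to a minimal free resolution.

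For the main claim, fix $\p\in\spec(R)$ and write $n=\beta_i(M)=\beta_{i+1}(M)$. If $(\Omega_i(M))_\p=0$, then the localized sequence at level $i$ forces $(\Omega_{i+1}(M))_\p\cong R_\p^n$; precomposing with the surjection $R_\p^n\twoheadrightarrow(\Omega_{i+1}(M))_\p$ gives a surjective endomorphism of $R_\p^n$, which is automatically an isomorphism, so its kernel $(\Omega_{i+2}(M))_\p$ vanishes. For the reverse containment, if $(\Omega_{i+2}(M))_\p=0$, then $(\Omega_{i+1}(M))_\p\cong R_\p^n$ sits inside $R_\p^n$ via an injective square matrix $A$; by McCoy's theorem $\det(A)$ is a non-zero-divisor, and the adjugate identity shows it annihilates $(\Omega_i(M))_\p$. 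Here the hypothesis $i>0$ becomes essential: since $\Omega_i(M)$ embeds in the free module $R^{\beta_{i-1}(M)}$, the same holds after localization, so any element of $(\Omega_i(M))_\p$ killed by the non-zero-divisor $\det(A)$ must vanish coordinate-wise, forcing $(\Omega_i(M))_\p=0$.

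For part (a), write $n=\beta_0(M)=\beta_1(M)$ and assume for contradiction that $M$ embeds in a free module $F$. Pick $\p\in\supp(M)\smallsetminus\supp(\Omega_2(M))$. Localizing the first two terms of the resolution gives a short exact sequence $0\to R_\p^n\xrightarrow{A}R_\p^n\to M_\p\to 0$ with $A$ injective, so $\det(A)$ is a non-zero-divisor annihilating $M_\p$. Since $M_\p\hookrightarrow F_\p$ is a submodule of a free $R_\p$-module, any nonzero element of $M_\p$ has a nonzero coordinate that no non-zero-divisor can annihilate. Hence $\det(A)\in\ann(M_\p)$ forces $M_\p=0$, contradicting $\p\in\supp(M)$.

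For part (b), retain $n=\beta_0(M)=\beta_1(M)$ and take $\p\in\minn(M)\smallsetminus\supp(\Omega_2(M))$. The same localization produces $M_\p\cong\coker(A)$ with $A$ an injective $n\times n$ matrix; Fact \ref{fact:linalg} refines this to $M_\p\cong\coker(A'')$ where $A''$ is an injective $(n-h)\times(n-h)$ matrix with entries in $\p R_\p$. Since $\p\in\minn(M)$, $M_\p$ is nonzero of finite length, so $\supp(M_\p)=\{\p R_\p\}$. But the support of the cokernel of a square matrix is precisely the locus where its determinant fails to be a unit, so $\p R_\p$ is the unique prime of $R_\p$ containing $\det(A'')$, and in particular the unique minimal prime over $(\det A'')$. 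Because $\det(A'')$ is a non-zero-divisor (McCoy) and a non-unit, Krull's principal ideal theorem yields $\height(\p R_\p)=1$. The main obstacle throughout is the reverse direction of the main claim, where the syzygy hypothesis $i>0$ must be combined with McCoy's theorem and the adjugate identity to convert a determinantal annihilation condition into vanishing of support; parts (a) and (b) both re-use this localized cokernel analysis, but with different hypotheses driving the final contradiction.
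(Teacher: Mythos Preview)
Your proof is correct, and it follows a genuinely different and more elementary route than the paper's. The paper establishes part~(b) first, invoking the New Intersection Theorem to bound $\dim(R_\p)\leq 1$ and then the Auslander--Buchsbaum formula to pin down $\height(\p)=1$ and $\depth(R_\p)=1$; part~(a) is then deduced via a depth inequality for first syzygies, and the main claim is obtained by combining~(a) with Lemma~\ref{lem:supp}. By contrast, you work entirely with the McCoy determinant / adjugate identity: the non-zero-divisor $\det(A)$ annihilates the cokernel, and whenever that cokernel embeds in a free module (which is exactly the first-syzygy hypothesis), it must vanish. This handles the reverse containment of the main claim and part~(a) in one stroke, and for part~(b) you replace the New Intersection Theorem by Krull's principal ideal theorem applied to $\det(A'')$, using that a non-zero-divisor avoids all minimal primes to force the height to be exactly~$1$. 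Your argument is self-contained and avoids any deep homological input; the paper's route, while heavier, has the side benefit of also computing $\depth(R_\p)$ and $\depth(M_\p)$ along the way. One minor point worth making explicit in your write-up of~(b): Krull gives only $\height(\p R_\p)\leq 1$, and the inequality $\height(\p R_\p)\geq 1$ is where the non-zero-divisor hypothesis enters (a non-zero-divisor lies in no minimal prime), so you might spell that out rather than folding both bounds into a single sentence.
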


\begin{proof}
\index{Betti number!for local rings, $\beta^R_i(-)$}
Suppose $\beta_0(M)=\beta_{1}(M)$ and $\supp(M)\smallsetminus\supp(\Omega_{2}(M))\neq\emptyset$.
Consider the exact sequence 
\[
0\to\Omega_2(M)\to R^{\beta_1(M)} \to R^{\beta_{0}(M)}\to M\to 0.
\]
Choose $\p\in\minn(M)\smallsetminus\supp(\Omega_{2}(M))$. 
Since $M_\p$ has  finite length as an $R_\p$-module, the complex 
\[
0\to R^{\beta_1(M)}_\p \to R^{\beta_{0}(M)}_\p \to 0
\]
has non-zero finite length homology.  By the New Intersection Theorem \cite{roberts} it follows that $\dim(R_\p)\leq 1$. 

Fact \ref{fact:linalg} implies that there exists a minimal $R_\p$-free resolution of $M_\p$ of the form 
\[
0\to R^n_\p\to R^n_\p\to M_\p\to 0
\]
for some $n>0$.  Therefore $1\geq\dim(R_\p)\geq\pd_{R_\p}(M_\p)=1$; hence 
$\height(\p)=\dim(R_\p)=1$, $\depth(M_\p)=0$ and $\depth_{R_\p}(R_\p)=\depth(M_\p)+\pd(M_\p)=1$.

Assume that $M=\Omega_1(L)$ for some $R$-module $L$. We will obtain a contradiction. Since $M_\p$ is finite length and $\dim(R_\p)=1$, it follows that $M_\p$ has no $R_\p$-free summands.  Therefore 
$M_\p=\Omega_1^{R_\p}(L_\p)$.  Since $M_\p$ is a first syzygy, 
\[
0=\depth(M_\p)\geq\min\{1,\depth(R_\p)\}=1.
\]
This is a contradiction; hence $M$ is not a first syzygy. 

Now suppose that $\beta_i(M)=\beta_{i+1}(M)$ for some $i>0$. Since $\Omega_i(M)$ is a first syzygy of $\Omega_{i-1}(M)$  it follows that $\supp(\Omega_i(M))\subseteq\supp(\Omega_{i+2}(M))$. By Lemma \ref{lem:supp} we get the opposite inclusion and the result follows.
\end{proof}

The following is an example where we have $\beta_1(M)=\beta_0(M)$ and $\supp(M)\nsubseteq \supp(\Omega_2(M))$.

\begin{syzex}
Let $S=k[x,y,z]$ and $\m=(x,y,z)$.  Let $R=S_\m/yzS_\m$ and let $M=R/xyR$.  The complex 
\[
\xymatrix{\cdots \ar[r]^z & R \ar[r]^y & R\ar[r]^z & R\ar[r]^{xy} & R\ar[r] & 0}
\]
is a minimal free resolution of $M$.  We have $\Omega_2(M)\cong zR\cong R/(y)$.  The prime ideal $\p=(x,z)$ of height 1 is in $\supp_R(M)$ but it is not in $\supp_R(\Omega_2(M))$. 
\end{syzex}

\begin{syzprop}\label{prop:quick}
\index{Betti number!for local rings, $\beta^R_i(-)$}
 Let $R$ be a noetherian local ring and $M$ a finitely generated $R$-module with non-decreasing Betti numbers.  Then either 
$\supp(\Omega_{2i}(M))$ is constant for all $i\geq 1$, or there exists $n\geq 1$ such that $\supp(\Omega_{2i}(M))=\spec(R)$ for all $i> n$ and $\supp(\Omega_{2j}(M))$ is constant for $1\leq j\leq n$.
\end{syzprop}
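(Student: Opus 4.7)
The plan is to first establish the following intermediate claim, from which the proposition follows by a three-way case analysis: if $\supp(\Omega_{2n}(M)) \neq \spec(R)$ for some $n \geq 1$, then $\supp(\Omega_{2j}(M)) = \supp(\Omega_{2n}(M))$ for every $1 \leq j \leq n$.

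To prove the intermediate claim I would combine the two preceding lemmas. Since the Betti numbers of $M$ are non-decreasing, the hypothesis $\beta_0(M) \leq \cdots \leq \beta_{2n-1}(M)$ of Lemma \ref{lem:supp} holds for every $n$, so $\supp(\Omega_{2n}(M)) \neq \spec(R)$ together with part (a) of that lemma gives $\beta_{2j}(M) = \beta_{2j+1}(M)$ for $j = 0, 1, \ldots, n-1$. For each $j$ with $1 \leq j \leq n-1$, since $2j > 0$, Lemma \ref{lem:shrink} converts this equality of Betti numbers into $\supp(\Omega_{2j}(M)) = \supp(\Omega_{2j+2}(M))$. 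Chaining these equalities produces $\supp(\Omega_2(M)) = \supp(\Omega_4(M)) = \cdots = \supp(\Omega_{2n}(M))$, proving the claim.

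Next, set $n^* := \sup\{n \geq 1 : \supp(\Omega_{2n}(M)) \neq \spec(R)\}$, with the convention that $n^* = 0$ if the set is empty. If $n^* = 0$, then $\supp(\Omega_{2j}(M)) = \spec(R)$ for every $j \geq 1$, which is the first alternative. If $n^* = \infty$, then for every $j \geq 1$ one may choose some $n \geq j$ in the set; the intermediate claim yields $\supp(\Omega_{2j}(M)) = \supp(\Omega_{2n}(M))$, and applying the claim with any larger $n'$ in the set shows these common values are consistent across all $j$, so the sequence is constant for $i \geq 1$. If $1 \leq n^* < \infty$, take $n = n^*$: the intermediate claim yields constancy of $\supp(\Omega_{2j}(M))$ for $1 \leq j \leq n$, while the definition of $n^*$ guarantees $\supp(\Omega_{2i}(M)) = \spec(R)$ for $i > n$, producing the second alternative.

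The main obstacle is the intermediate claim — in particular, recognizing how Lemma \ref{lem:supp}(a) supplies exactly the Betti-number equalities that Lemma \ref{lem:shrink} needs in order to convert descent of supports into outright equality of supports. Once this is in hand, the subsequent bookkeeping on $n^*$ is routine.
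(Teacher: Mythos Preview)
Your proof is correct and follows essentially the same approach as the paper: both arguments combine Lemma~\ref{lem:supp}(a) with Lemma~\ref{lem:shrink} to obtain constancy of $\supp(\Omega_{2j}(M))$ up to any index where the support is not all of $\spec(R)$. Your case analysis via $n^*$ is a slightly cleaner packaging---it sidesteps the paper's separate invocation of Lemma~\ref{lem:supp}(c) to show ``once $\spec(R)$, always $\spec(R)$''---but the underlying mechanism is the same.
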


\begin{proof}
Suppose $\supp(\Omega_{2i}(M))\neq \spec(R)$ for for some $i\geq 2$.  By Lemma \ref{lem:supp} it follows that $\beta_{2j}(M)=\beta_{2j+1}(M)$ for all $j$ with $0\leq j<i$. From Lemma \ref{lem:shrink} we get that $\supp(\Omega_{2j}(M))$ is constant for $1\leq j\leq i$. 

Now suppose there exists $n$ such that $\supp(\Omega_{2n+2}(M))=\spec(R)$.  Assume that 
$\supp(\Omega_{2n+2i}(M))\neq\spec(R)$ for some $i>0$. Then by Lemma \ref{lem:supp}~\eqref{lem:supp3} it follows that 
\[
\supp(\Omega_{2n+2i}(M))\cap\minn(R)=\supp(\Omega_{2n+2}(M))\cap\minn(R)=\minn(R).
\]
Thus $\supp(\Omega_{2n+2i}(M))=\spec(R)$ a contradiction. Therefore $\supp(\Omega_{2n+2i}(M))=\spec(R)$ for all $i>0$ and the result follows.
\end{proof}

We conclude with a few key examples. 
The following example is due to Hamid Rahmati and can be found in \cite{Crabbe}.
\begin{syzex}
Let $R=k[[x,y]]/(x^2,xy)$ and $M=R/(y)$. A minimal free resolution of $M$ has the form
\[
\xymatrix{ \cdots\ar[rr]^{\left[{{x \ y \ 0}\atop{0 \ 0 \ x}}\right]} && R^2\ar[r]^{[x,y]} & R \ar[r]^{x} & R \ar[r]^{y} & R \ar[r] & 0}.
\]
Also $\dim(M)=\dim(\Omega_2(M))=0$ and $\dim(\Omega_i(M))=1=\dim(R)$ for $i\neq 0, 2$.
\end{syzex}

In the following example we construct a module such that the support of each of its odd syzygies is equal to $\spec(R)$ while the support of each of its even syzygies is not.
\begin{syzex}
Let $R=[a,b,c,d,e]/(ade-bce)$.  Let $M$ be the cokernel of the first map in the following matrix factorization:
\[
 \xymatrix{\cdots\ar[r]^{\left[{{a \ b}\atop{c \ d}}\right]} & R^2\ar[rr]^{\left[{{de \ -be}\atop{-ce \ ae}}\right]} && R^2\ar[r]^{\left[{{a \ b}\atop{c \ d}}\right]} & R^2}.
\]
Then $\supp(\Omega_{2i+1}(M))=\spec(R)$ and $\supp(\Omega_{2i}(M))=\supp(R/(ad-bc))\neq\spec(R)$ for all $i\geq 0$. 
\end{syzex}
The following example is due to Craig Huneke and can be found in \cite{Crabbe}.
\begin{syzex}
Let $S=\mathbb Q[x,y,z,u,v]$ and let $I\subseteq S$ be the ideal
\[
I=\langle x^2,xz,z^2,xu,zv,u^2,v^2,zu+xv+uv,yu,yv,yx-zu,yz-xv \rangle.
\]
Let $R=S/I$, which is a $1$-dimensional ring of depth $0$.  A computation using Macaulay2 \cite{M2} yields that $y$ is a parameter, $(0:y)=(u,v,z^2)$ and $(y)=(0:_R(0:_Ry))$. Let $M$ be the cokernel of the rightmost map in the following exact complex.
\[
\xymatrix{
\cdots \ar[r] & R^3\ar[rr]^{[u\ v\ z^2]} && R\ar[r]^{y}& R\ar[r]^{\left[{u\atop v}\atop{z^2}\right]}& R^3
}
\]
Then the first and
third syzygy modules of $M$ are $R/(y)$ and $(0:y)$ respectively. These are both modules of finite
length since $y$ is a parameter, but all other syzygies have dimension 1.
\end{syzex}

\section*{Acknowledgments}
We are grateful to our other MRC Snowbird group members, Christine Berkesch and Daniel Erman for their hard work and ideas during the summer research community.  Craig Huneke guided our MRC Snowbird group and developed many of the key arguments in the proof of our main result. Additionally, Srikanth Iyengar showed us how to simplify our main argument.
We would also like to thank Luchezar Avramov, David Jorgensen, Dan Katz, 
Roger Wiegand and the anonymous referee for giving us invaluable feedback on this research.  Lastly we would like to thank Roger and Sylvia Wiegand for their hospitality during Kristen's visit to Lincoln.

\end{document}